\renewcommand\thesubfigure{(\alph{subfigure})}
\newtheorem{theorem}{Theorem}
\newtheorem{lemma}[theorem]{Lemma}
\newtheorem{definition}[theorem]{Definition}
\def \ham{{\mathcal{HAM}}} 
\newcommand{\doublecell}[2]{\begin{tabular}{@{}c@{}}#1\\#2\end{tabular}}
\begin{document}

\title{Hamiltonian Maker-Breaker games on small graphs}
\date{}

\author{
	Milo\v{s} Stojakovi\'{c}
		\footnote{Department of Mathematics and Informatics, Faculty of Sciences, University of Novi Sad, Serbia.
		Partly supported by Ministry of Education and Science, Republic of Serbia, and
		Provincial Secretariat for Science, Province of Vojvodina. {\tt \{milos.stojakovic, nikola.trkulja\}@dmi.uns.ac.rs}}
	\and
	Nikola Trkulja
		\footnotemark[1]
}

\maketitle

\begin{abstract}
We look at the unbiased Maker-Breaker Hamiltonicity game played on the edge set of a complete graph $K_n$, where Maker's goal is to claim a Hamiltonian cycle. First, we prove that, independent of who starts, Maker can win the game for $n = 8$ and $n = 9$. Then we use an inductive argument to show that, independent of who starts, Maker can win the game if and only if $n \geq 8$. This, in particular, resolves in the affirmative the long-standing conjecture of Papaioannou from~\cite{Papaioannou}.

We also study two standard positional games related to Hamiltonicity game. For Hamiltonian Path game, we show that Maker can claim a Hamiltonian path if and only if $n \geq 5$, independent of who starts. Next, we look at Fixed Hamiltonian Path game, where the goal of Maker is to claim a Hamiltonian path between two predetermined vertices. We prove that if Maker starts the game, he wins if and only if $n \geq 7$, and if Breaker starts, Maker wins if and only if $n \geq 8$. Using this result, we are able to improve the previously best upper bound on the smallest number of edges a graph on $n$ vertices can have, knowing that Maker can win the Maker-Breaker Hamiltonicity game played on its edges.
	
To resolve the outcomes of the mentioned games on small (finite) boards, we devise algorithms for efficiently searching game trees and then obtain our results with the help of a computer.
\end{abstract}

\section{Introduction}

A positional game is a hypergraph $(X,\mathcal{F})$, where $X$ is a finite set representing the board of the game, and $\mathcal{F} \subseteq 2^{X}$ is a family of sets that we call \emph{winning sets}. In Maker-Breaker positional games, the players are called Maker and Breaker. In the course of the game, Maker and Breaker alternately claim unclaimed elements of the board $X$, one element at a time, until all of the elements are claimed. Maker wins the game if he claims all elements of a winning set, while Breaker wins if he claims an element in every winning set. Each game can be observed in two variants, depending on which player is to play first. One of the main questions in the theory of positional games is the existance of a \emph{winning strategy} for one of the players, when both are playing optimally. The player that has a strategy to win the game is referred to as the winner of the game.

The positional games we intend to study are played on graphs, and in particular, on the edge set of a complete graph $K_n$. Our prime interest lies with Hamiltonicity game~$\ham_n$, where the winning sets are the edge sets of all Hamiltonian cycles in $K_n$. The game was first introduced by Chv\'atal and Erd\H{o}s in~\cite{ChvatalErdos}, and since then it has been one of the most studied positional games on graphs, see~\cite{PositionalGamesBook, krivelevich2011critical}. It was shown in~\cite{ChvatalErdos} that Maker has a winning strategy for all sufficiently large $n$. Papaioannou~\cite{Papaioannou} later proved that Maker wins the game for all $n \geq 600$, and at the same time conjectured that the smallest $n$ for which Maker can win is $8$. Hefetz and Stich~\cite{HefetzStich} further improved the upper bound by showing that Maker wins for all $n \geq 29$. We note that these statements hold under the assumption that Maker is the first to play.

In the present paper, we determine the outcome of Hamiltonicity game for every value of $n$, and for each of the players starting the game. In particular, this resolves the mentioned long-standing conjecture of Papaioannou in the affirmative. As the trivial cases $n \leq 3$ can be handled directly, from now on we assume $n \geq 4$.
\begin{theorem} \label{t:hcg}
In the Maker-Breaker~$\ham_n$ game on $E(K_n)$, Maker, as first or second player, wins if and only if $n \geq 8$.
\end{theorem}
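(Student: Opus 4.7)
My plan is to prove the two directions of the equivalence separately. For the negative direction, $n \leq 7$ implies Breaker wins, I would run the game-tree search algorithms developed elsewhere in the paper on each $n \in \{4,5,6,7\}$ with both choices of starter. The boards are small (at most $\binom{7}{2} = 21$ edges), so exhaustive search with isomorphism-based canonicalisation, transposition tables, and pruning based on feasibility obstructions (for example, a vertex that can no longer reach degree $2$ in Maker's graph, or a vertex whose incident Maker edges already force a particular pair of cycle-neighbours) settles these instances comfortably.

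For the positive direction, $n \geq 8$ implies Maker wins, the base cases $n = 8$ and $n = 9$ in both starter variants are computationally the most demanding and are again handled by direct game-tree search; this is the step the abstract describes as being done first. The remaining cases $n \geq 10$ I would handle by induction, the key tool being the Fixed Hamiltonian Path theorem announced in the abstract: Maker wins the Fixed~HP game on $K_m$ as first player for $m \geq 7$ and as second player for $m \geq 8$. The reduction fixes a vertex $v \in V(K_n)$ and a vertex $u \neq v$. In his first move Maker claims $vu$; in his second move Maker claims $vw$ for some $w \notin \{u,v\}$ chosen adaptively so that $vw$ is still free (always possible since at least $n-2 \geq 8$ candidates for $w$ remain after the opening exchange). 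From then on Maker disregards $v$ and plays the Fixed~HP winning strategy with endpoints $u,w$ on the subgraph $K_n - v \cong K_{n-1}$. A Hamiltonian path from $u$ to $w$ in $K_n - v$, together with the two edges $vu$ and $vw$, closes to a Hamiltonian cycle on $K_n$, which is what Maker needs.

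The main obstacle is that after Maker has secured $vu$ and $vw$, the residual Fixed~HP subgame on $K_n - v$ may begin with Breaker already owning one or two edges (depending on which of Breaker's opening moves were played inside $K_n - v$ rather than at $v$), so Maker is effectively playing the Fixed~HP game as second player with a small head start for Breaker. I would need to verify that for $n-1 \geq 9$, which is one step above the Fixed~HP threshold $m = 8$, Maker's winning strategy in the Fixed~HP theorem is robust enough to absorb this head start. I expect this to follow either from inspecting the proof of the Fixed~HP theorem and observing that it naturally tolerates a bounded head start, or from a short auxiliary argument exploiting the slack $n - 1 > 8$. Reconciling these observations with the two starter assignments of $\ham_n$ (Maker-first versus Breaker-first) and the corresponding starter roles of the Fixed~HP subgame is the heart of the inductive argument; once completed, the induction closes and, combined with the computer-verified base cases, yields both directions of the stated theorem.
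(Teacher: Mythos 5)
Your handling of the base cases matches the paper exactly: Breaker's wins for $n \in \{4,5,6,7\}$ and Maker's wins for $n \in \{8,9\}$ (both as first and as second player) are established by the exhaustive search machinery, and these constitute Lemma~\ref{l:hcg_bc}. The divergence, and the gap, is in the inductive step.

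The paper does a step of size~2 from $\ham_n$ to $\ham_{n+2}$ (Lemma~\ref{l:hcg_is}), assuming Maker wins $\ham_n$ as \emph{second} player. After Breaker's opening move $(u,v)$, Maker claims $(v,w)$ and partitions the remaining board into $E_1$ (the clique on the other $n$ vertices) and $E_2$ (edges meeting $\{v,w\}$). Crucially, Maker then mirrors Breaker \emph{set by set}: whenever Breaker plays in $E_i$, Maker responds in $E_i$. This keeps the subgame on $E_1$ a clean $\ham_n$ game with the two players alternating as usual (Maker as second player, which is exactly what the induction hypothesis covers), and the subgame on $E_2$ is handled by a small bespoke four-rule strategy plus case analysis. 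Breaker never accumulates a head start in either subgame. A companion argument of the same shape gives Lemma~\ref{l:fxcg_is}, which runs from $\ham_n$ to $\mathcal{FHP}_{n+2}$; so in the paper the Fixed~HP result is a \emph{consequence} of the Hamiltonicity induction, not a tool used inside it.

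Your reduction goes the other way: fix $v$, claim $vu$ and $vw$, then treat $K_n - v$ as a Fixed~HP board with endpoints $u,w$. The obstacle you flag is genuine and, as written, fatal. During the two rounds in which Maker spends his moves on $vu$ and $vw$, Breaker is free to place his first two edges entirely inside $K_n - v$. When Maker finally turns to the Fixed~HP subgame, Breaker may already own two edges there while Maker owns none, so the position is \emph{not} an instance of $\mathcal{FHP}_{n-1}$ in the sense of Theorem~\ref{t:fhpg}. There is no general principle that extra Breaker moves are harmless (the monotonicity folklore goes only in Maker's favour), and nothing in the paper, including the proof of Theorem~\ref{t:fhpg}, establishes that a two-edge head start for Breaker on $K_9$ or $K_{10}$ can be absorbed. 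Your hope that the slack $n-1 > 8$ suffices is precisely the unproven claim on which the whole induction rests. To repair the argument in your framework you would need either (i) a pairing discipline that prevents Breaker from ever getting ahead in the $K_n - v$ subgame, as the paper does on $E_1$, but then you have essentially reinvented Lemma~\ref{l:hcg_is}; or (ii) a strengthened Fixed~HP statement quantifying tolerance to Breaker head starts, which is not available. As it stands, the positive direction for $n \geq 10$ does not go through.
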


Next, we look at two games where Maker's goal is to claim a Hamiltonian path. In Hamiltonian Path game~$\mathcal{HP}_n$, first introduced in~\cite{Papaioannou}, the winning sets are the edge sets of all Hamiltonian paths in $K_n$. We are able to show the following.
\begin{theorem} \label{t:hpg}
In the Maker-Breaker Hamiltonian Path game~$\mathcal{HP}_n$ on $E(K_n)$, Maker, as first or second player, wins if and only if $n \geq 5$.
\end{theorem}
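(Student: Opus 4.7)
The plan is to split the proof into two directions: Breaker wins $\mathcal{HP}_n$ in some variant for $n \leq 4$, and Maker wins $\mathcal{HP}_n$ in both variants for $n \geq 5$. I would first invoke the standard monotonicity principle for Maker-Breaker games -- additional claimed edges can never hurt their owner -- which reduces each direction to a single variant: for $n = 4$ it suffices to show that Breaker wins when Maker moves first (the ``Breaker-as-second-player'' variant), and for $n \geq 5$ it suffices to show that Maker wins when he moves second. The cases $n \leq 3$ are trivial and handled by direct inspection of $K_2$ and $K_3$.

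For the lower bound at $n = 4$, I would use a pairing strategy based on the three perfect matchings $M_1 = \{12,34\}$, $M_2 = \{13,24\}$, $M_3 = \{14,23\}$ of $K_4$. The key observation is that every Hamiltonian path $a$-$b$-$c$-$d$ of $K_4$ contains the perfect matching $\{ab, cd\}$ -- the two non-adjacent edges of the path. Breaker's strategy is then immediate: whenever Maker claims an edge $e \in M_i$, Breaker responds by claiming the other edge of $M_i$. This guarantees that at the end of the game Maker holds at most one edge from each $M_i$, hence no complete perfect matching, and therefore no Hamiltonian path.

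For the upper bound when $n \geq 8$, I would appeal directly to Theorem~\ref{t:hcg}: as second player Maker already has a winning strategy in $\ham_n$, and any Hamiltonian cycle contains a Hamiltonian path (obtained by removing any single edge of the cycle). So Maker's $\ham_n$ strategy wins $\mathcal{HP}_n$ for free in this range, and no additional work is needed.

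The main obstacle is the three intermediate cases $n \in \{5, 6, 7\}$, where Theorem~\ref{t:hcg} does not yet apply (Maker cannot guarantee a Hamiltonian cycle) and a pen-and-paper construction of Maker's strategy would require substantial case analysis: Maker's natural plan -- extending his current longest path -- must be executed while simultaneously preventing Breaker from isolating vertices or severing the remaining ``good'' extension edges. I would handle these three boards via the computer-assisted game-tree search that the paper develops, verifying directly that Maker has a winning strategy as second player in each of $\mathcal{HP}_5$, $\mathcal{HP}_6$, and $\mathcal{HP}_7$; the monotonicity reduction above then upgrades this to a win in both variants.
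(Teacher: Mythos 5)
Your proposal follows the same decomposition the paper uses: the cases $n\in\{5,6,7\}$ are handled by the computer-assisted game-tree search (this is Lemma~\ref{l:hpg_bc}), and for $n\geq 8$ you observe, as the paper does, that a Hamiltonian cycle contains a Hamiltonian path, so Maker's strategy from Theorem~\ref{t:hcg} carries over; the monotonicity reduction you invoke is precisely Theorem~\ref{t:aux}. The one genuine departure is that the paper also delegates the $n=4$ case to the computer (it is part of Lemma~\ref{l:hpg_bc}), whereas you give a clean pen-and-paper pairing strategy: the three perfect matchings of $K_4$ partition $E(K_4)$, every Hamiltonian path $a$-$b$-$c$-$d$ contains the perfect matching $\{ab,cd\}$, and Breaker's pairing ensures Maker never completes any $M_i$. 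That argument is correct and is a mild elegance gain over the paper's reliance on the machine for this one small board. One small caveat: for $n\leq 3$ the outcome actually depends on who moves first (for $n=3$, Maker as first player wins $\mathcal{HP}_3$ by claiming two of the three edges), so these cases do not satisfy the biconditional in the theorem statement; the paper excludes them by assuming $n\geq 4$ throughout, and you should state that exclusion rather than claiming the cases are resolved consistently with the theorem.
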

This theorem is a strengthening of Papaioannou's result from~\cite{Papaioannou} where he proved that Maker, as first player, can win $\mathcal{HP}_n$ if and only if $n \geq 5$.
	
In Fixed Hamiltonian Path game~$\mathcal{FHP}_n$, the goal of Maker is to claim a Hamiltonian path between two fixed (predetermined) vertices, $u,v\in V(K_n)$. Even though in the literature this game does not draw as much interest as $\ham_n$ and $\mathcal{HP}_n$, it has appeared as an auxiliary game when studying some other games on graphs, see e.g.~\cite{FixedHamPathPaper}.
\begin{theorem} \label{t:fhpg}	
In the Maker-Breaker Fixed Hamiltonian Path game~$\mathcal{FHP}_n$ on $E(K_n)$, Maker, as first player, wins if and only if $n \geq 7$, and as second player he wins if and only if $n \geq 8$.
\end{theorem}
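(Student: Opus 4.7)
The plan is to derive Theorem~\ref{t:fhpg} from a handful of computer-verified base cases together with a short inductive step. The base cases I would check by game-tree search are the four "boundary" outcomes: $\mathcal{FHP}_6$ as Maker-first (won by Breaker), $\mathcal{FHP}_7$ as Maker-first (won by Maker), $\mathcal{FHP}_7$ as Breaker-first (won by Breaker), and $\mathcal{FHP}_8$ as Breaker-first (won by Maker). The standard Maker--Breaker monotonicity principle---that an extra move can only help its owner, so a Maker win going second lifts to a Maker win going first and symmetrically a Breaker win when Maker starts lifts to a Breaker win when Breaker starts---then upgrades these four outcomes to the full picture for $n\le 8$: Maker loses both variants for $n\le 6$, Maker wins as first but not second for $n=7$, and Maker wins both variants for $n=8$. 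This settles the "only if" direction and provides the base of the induction for the remaining "if" direction.

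For $n\ge 9$ the plan is to induct on $n$ and show that a Maker win in Breaker-first $\mathcal{FHP}_{n-1}$ implies one in Breaker-first $\mathcal{FHP}_n$; monotonicity then automatically gives a Maker win in Maker-first $\mathcal{FHP}_n$ as well. The inductive step is a vertex-elimination at the endpoint $u$. After Breaker's opening move $e$, Maker claims an edge $uw$ for some $w\notin\{u,v\}$, which is always possible because Breaker has touched at most one of the $n-2$ available $u$-edges going to non-$v$ vertices. Any Hamiltonian $u$--$v$ path that Maker can still complete must begin $u,w,\ldots,v$, so the remainder of the game is equivalent to $\mathcal{FHP}_{n-1}$ played on $V\setminus\{u\}$ with endpoints $w,v$. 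If $e$ lies on this reduced board, then Breaker has genuinely moved first there and Maker applies his inductive second-player strategy directly. If instead $e$ is incident to $u$, then from the reduced game's point of view $e$ is a free "pass", so Maker is effectively first there; monotonicity converts his inductive second-player strategy into a first-player one and Maker still wins. Every later Breaker move on a $u$-edge is handled in the same way: each is an invisible pass that Maker absorbs by playing an extra reduced-board move, which is harmless because the winning sets are upward-closed.

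The main obstacle I anticipate is the computer verification itself, especially of the base case $\mathcal{FHP}_8$ as Breaker-first, whose game tree is far too large for naive search. That step must rely on the branch-pruning and symmetry-reduction tools developed in the algorithmic portion of the paper (and already deployed for~$\ham_n$ in Theorem~\ref{t:hcg}); once those are in hand, both the remaining base cases and the inductive step are routine.
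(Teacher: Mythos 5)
Your inductive step has a parity flaw that I believe is fatal as written. Consider the case where Breaker's opening move $e$ lies on the reduced board, i.e.\ $e$ is an edge of $K_n$ with both endpoints in $V\setminus\{u\}$. Your strategy has Maker spend his reply on $(u,w)$, an edge \emph{outside} the reduced board. From the reduced game's point of view, Breaker has now made one move and Maker has made none, and it is again Breaker's turn. In other words, Breaker gets two consecutive moves on the reduced board before Maker's first, which is equivalent to granting Breaker a free extra edge there. Monotonicity of Maker--Breaker games only says that an extra claimed edge helps its \emph{owner}; it gives no license to discard an extra edge held by the opponent. So a winning second-player strategy for $\mathcal{FHP}_{n-1}$ does not survive the gift of an additional Breaker edge, and this branch of your induction collapses. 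Delaying the move $(u,w)$ until Breaker plays a $u$-edge does not repair it either, since Breaker may then simply race to claim all $u$-edges and isolate $u$ before Maker commits; the board does not split into two edge-disjoint parts on which Maker can mirror Breaker's choice of side, and that mirroring structure is exactly what a sound pairing argument needs.

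The paper avoids this by inducting in steps of two, removing \emph{both} distinguished endpoints at once: $E(K_{n+2})$ then splits cleanly into $E_1=E(V\setminus\{u,v\})$ and $E_2$ (all edges meeting $u$ or $v$), and Maker can always answer Breaker in the same part, so no parity slippage occurs in either part. Moreover, the reduction target is $\ham_n$ on $V\setminus\{u,v\}$ (a cycle which Maker extends to a $u$--$v$ Hamiltonian path via the rule-based strategy on $E_2$), not $\mathcal{FHP}_{n-1}$; this is the content of Lemma~\ref{l:fxcg_is}, which borrows the $E_2$-strategy verbatim from Lemma~\ref{l:hcg_is}.

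A smaller issue: you plan to verify only four boundary cases by computer and infer the rest. Monotonicity in the starting player (Theorem~\ref{t:aux}) is fine, but there is no monotonicity in the board size: Breaker winning $\mathcal{FHP}_6$ does not formally imply Breaker wins $\mathcal{FHP}_5$ or $\mathcal{FHP}_4$, since the winning sets of the smaller game are not contained in (nor contain) those of the larger. You would still need to check $n=4,5,6$ directly, as the paper does in Lemma~\ref{l:fhcg_ib}.
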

Note that in the game~$\mathcal{FHP}_n$ the edge between the fixed vertices $u$ and $v$ actually does not participate in any winning set, so right away we obtain the same result for the game played on $E(K_n) \setminus \{(u,v)\}$.

Here we also want to mention two other standard positional games, Connectivity and Perfect Matching, where Maker's goal is to claim, respectively, a spanning tree and a perfect matching. For both of these  games it is straightforward to determine the outcome for every size of the board. Maker wins the Connectivity game as first player for every $n$, and as second player if and only if $n \geq 4$. In Perfect Matching game, where $n$ is even, Maker can win as first or second when $n$ is at least $6$. On top of that, he wins also for $n=2$ when he plays first.

Let us note that even though answering the question of who wins a game on a small board (with $n$ fixed) is a finite problem, it still may have a greater scientific impact for several reasons. First of all, the approaches we use to resolve standard positional games when $n$ is large often do not apply for small $n$, and in that case in order to determine the outcome we need to develop new methods. As we saw, resolving the ``small cases'' is straightforward for some games, but not for all.

Also, standard positional games, like Hamiltonicity, Hamiltonian Path, Connectivity, etc.\ are often used as auxiliary games when studying other positional games on graphs. Sometimes these auxiliary games have boards of fixed size, and knowing their outcome is essential for completing the analysis. An example of that can be found in~\cite{Sparse}, where one of the problems tackled was to estimate the smallest number of edges $\hat{m}(n)$ a graph on $n$ vertices can have, knowing that Maker as first player can win the Maker-Breaker Hamiltonicity game played on its edges. It was proved in~\cite{Sparse} that $ 2.5n \leq \hat{m}(n) \leq 21n$, for all $n \geq 1600$. We show how to apply our results about Hamiltonicity game in Theorem~\ref{t:hcg} and Fixed Hamiltonian Path game in Theorem~\ref{t:fhpg} to improve this upper bound, eventually obtaining the following.
\begin{theorem} \label{t:app}	
For $n \geq 336$, we have $\hat{m}(n) \leq 4n$.
\end{theorem}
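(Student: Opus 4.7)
The plan is to exhibit, for every $n \geq 336$, a graph $G_n$ on $n$ vertices with $|E(G_n)| \leq 4n$ on which Maker (as first player) wins the Maker-Breaker Hamiltonicity game, thereby witnessing $\hat{m}(n) \leq 4n$. The construction is a \emph{cycle of gadgets}: choose $t$ ``port'' vertices $c_1, c_2, \ldots, c_t$ arranged cyclically (indices modulo $t$), and between each consecutive pair $c_i, c_{i+1}$ insert a gadget $H_i$ isomorphic to $K_8$ on the eight-vertex set $\{c_i, c_{i+1}\} \cup \{v_{i,1}, \ldots, v_{i,6}\}$, consisting of the two ports together with six fresh internal vertices. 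By the remark following Theorem~\ref{t:fhpg}, the port-to-port edge $c_i c_{i+1}$ plays no role in the $\mathcal{FHP}$ game and may be deleted for free, so each $H_i$ contributes $\binom{8}{2} - 1 = 27$ edges. Consecutive gadgets share exactly the port between them and no edges; nonconsecutive gadgets are vertex-disjoint.

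Maker's strategy is to play the Fixed Hamiltonian Path game $\mathcal{FHP}_8$ inside every gadget $H_i$ with prescribed endpoints $c_i$ and $c_{i+1}$. Since the $H_i$ are edge-disjoint, Maker combines these local games by the standard \emph{disjoint boards} trick: he designates one gadget $H_{i_0}$ in advance, opens the global game by playing his first-player $\mathcal{FHP}_8$ move in $H_{i_0}$, and afterwards on every turn locates the unique gadget $H_i$ containing Breaker's latest edge and responds inside $H_i$ using his winning $\mathcal{FHP}_8$ strategy --- the first-player strategy if $i = i_0$, and the second-player strategy otherwise. Theorem~\ref{t:fhpg} supplies \emph{both} strategies for $\mathcal{FHP}_8$, so every local play ends with Maker owning a Hamiltonian $c_i$--$c_{i+1}$ path of $H_i$; concatenating these $t$ paths around the port cycle yields a Hamiltonian cycle of $G_n$ claimed entirely by Maker.

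The remaining work is arithmetic. When $7 \mid n$ one uses $t = n/7$ gadgets of size $8$, giving $n$ vertices and $27t < 4n$ edges. Otherwise, write $n = 7 t_0 + r$ with $r \in \{1, \ldots, 6\}$ and replace a single size-$8$ gadget by one enlarged gadget $H_{i_0}$ of size $8 + r$ (a copy of $K_{8+r}$ minus its port-to-port edge) with two prescribed ports; since $8 + r \geq 8$, Theorem~\ref{t:fhpg} still furnishes winning $\mathcal{FHP}_{8+r}$ strategies in either role. The total edge count becomes $27(t_0 - 1) + \binom{8+r}{2} - 1$, and a direct comparison with $4n = 28 t_0 + 4r$ shows that the required inequality holds once $t_0$ is large enough to absorb the $O(r^2)$ overhead of the enlarged gadget; tracking the tightest residue $r = 6$ through this calculation gives the stated threshold $n \geq 336$. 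Theorem~\ref{t:hcg} is used to cover the degenerate bottom of the range where the cycle of gadgets collapses to a single component: on any $K_n$ with $n \geq 8$ Maker wins Hamiltonicity outright, guaranteeing the estimate whenever $\binom{n}{2} \leq 4n$.

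The main obstacle is certifying that the disjoint-boards combination really delivers a winning strategy on $G_n$. Maker spends only one edge per global turn while needing to keep winning plays alive in $t$ different subgames simultaneously, and the identity of the player who first touches a given gadget depends on Breaker's choices, so Maker must be prepared to play either role locally. This is exactly the content of the \emph{both starting players} clause of Theorem~\ref{t:fhpg}: every gadget other than $H_{i_0}$ is opened by Breaker and handled by Maker's second-player strategy, while $H_{i_0}$ is opened by Maker's designated first move. Once this structural point is settled, the rest reduces to the residue-by-residue edge bookkeeping sketched above.
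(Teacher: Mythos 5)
Your construction and argument are essentially the paper's: a cyclic chain of overlapping cliques with one edge deleted per clique, local Fixed Hamiltonian Path games $\mathcal{FHP}_8$ (via Theorem~\ref{t:fhpg}) on edge-disjoint boards, and the standard pairing/extra-move argument to run them in parallel. The only real difference is bookkeeping: you absorb the residue $r$ into a single enlarged gadget of size $8+r$, while the paper spreads it across $r$ gadgets of size $9$; and a small arithmetic slip --- with your one-big-gadget variant the worst case $r=6$ actually yields the (stronger) threshold $n\geq 279$ rather than $336$, which of course still proves the stated bound for $n\geq 336$.
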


Positional games are combinatorial games (sequential two player games with perfect information and no randomness involved), and it is well known (see, e.g.,~\cite{AIBook}) that we can find out which of the players has a strategy to win by simply traversing the whole game tree of the game. But this fact alone is of limited practical use, knowing that already for games on relatively small boards the game trees are way too big (they are exponentially large in the size of the game board) to be completely traversed by a computer. In particular, if the board of the game is $E(K_n)$, its size is $\binom{n}{2}$, so there are $\binom{n}{2}!$ different game plays.

Often there is no need to search through the whole game tree, as some moves are ``analogue'' to the others, we may arrive to the same game position more than once, and on top of that some game positions are ``similar'' to the others. In Section~\ref{s:aa} we devise a sophisticated set of algorithms that formalizes and exploits these ``similarities'' as part of the optimization of the brute force search algorithm. This enables us to write a computer program that efficiently calculates the outcome of all three mentioned games for enough initial values of $n$ to inductively determine the outcome of the game for every $n$.

When implementing the algorithms we aimed at doing it in a generic way to make our code easily adaptable for other positional games on graphs. Even though some algorithms are tailored to fit the particularities of the games we analyzed, most of them are generally applicable for  determining the outcome of positional games on graphs with the help of a computer.  Having that in mind, our software can be seen as a general framework for computer based attacks on positional games.

\subsection{Preliminaries and organization}

	Our graph-theoretic notation is standard and follows the one from \cite{West}. In particular, we
	use the following. For a graph $G$, let $V(G)$ denote its set of vertices and $E(G)$ its set of
	edges, where $v(G) = |V(G)|$ and $e(G) = |E(G)|$. For a set $A \subseteq V(G)$, let $G[A]$ denote
	the subgraph of $G$ induced on the vertex set $A$ and let $E_G(A)$ denote the set of edges of $G$
	with both endpoints in $A$.

	Results presented in this paper rely on the following statement, see e.g.~\cite{PositionalGamesBook}.
	\begin{theorem} \label{t:aux}
	If Maker (or Breaker) has a winning strategy in some positional game as second player, then he also
	has a winning strategy if he starts the game.
	\end{theorem}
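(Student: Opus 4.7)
The plan is to use the classical extra-move (strategy-stealing) argument, exploiting the fact that in a Maker-Breaker game an additional claimed element is never harmful to either player: Maker only needs to own some winning set, so extra elements can never spoil such a set, and Breaker only needs to intersect every winning set, so extra elements can never spoil an intersection. This monotonicity is the one combinatorial fact the proof rests on.

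Without loss of generality let the player with the second-player winning strategy~$S$ be called~$P$ (the argument is identical for Maker and Breaker). Suppose $P$ is forced to move first. First, I would have $P$ claim an arbitrary board element $x_0$ as an \emph{extra} move. After the opponent's reply $y_1$, $P$ mentally simulates a game in which $y_1$ is the opponent's opening move and $P$ is the second player, and applies~$S$ in that simulated game. The key invariant I would maintain through the play is that, at every point, $P$'s actually claimed set equals $P$'s claimed set in the simulation plus exactly one extra element~$e$ (initially $e=x_0$); the opponent's moves, by construction, coincide in the real game and in the simulation.

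Now I would specify how $P$ translates the strategy's instruction $s_i$ into a real move. If $s_i \neq e$, then by the invariant $s_i$ is still unclaimed in the real game, so $P$ plays $s_i$ and the invariant is preserved with the same $e$. If $s_i = e$, then $P$ already owns $s_i$ in the real game; he instead claims any currently unclaimed element $e'$ (such an element exists because the real board is always one move ahead of the simulated one), and updates the extra element to $e'$. Iterating, at the end of the real game $P$'s claimed set is the union of the simulation's final set for $P$ with one extra element. Since $S$ is winning, the simulation's final set for $P$ already witnesses the win (contains a winning set if $P$ is Maker, hits every winning set if $P$ is Breaker), so the real set does as well by monotonicity.

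The argument is entirely routine; the only subtle point is the bookkeeping for the shifting extra element together with the observation that an unclaimed element is always available when the strategy tries to replay $e$. I do not foresee any real obstacle, and this is why the result is usually stated without proof and referenced, as done here, to a standard source such as~\cite{PositionalGamesBook}.
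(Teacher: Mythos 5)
The paper does not prove Theorem~\ref{t:aux}; it states it as a known fact and cites~\cite{PositionalGamesBook}. Your argument is the standard extra-move (strategy-stealing) argument for Maker-Breaker games, which is exactly the proof one finds in such references, and it is correct: the invariant that the real claimed set equals the simulated one plus a single floating extra element is maintained, an unclaimed element is always available when the simulated strategy calls for the extra element (the real board is one move ``ahead''), and the monotonicity of both players' winning conditions in Maker-Breaker games converts the simulated win into a real one.
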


    If a Maker-Breaker game played on edges of a graph is in progress, Maker's (respectively, Breaker's) vertex degree of a vertex $v$, denoted by $d_M(v)$ (respectively, $d_B(v)$), is the number of edges incident to $v$ that are claimed by Maker (respectively, Breaker). Maker's edge degree of an edge $e = (u,v)$, denoted by $ed_M(e)$, is defined to be $ed_M(e) := d_M(u) + d_M(v)$, and Breaker's edge degree is $ed_B(e) := d_B(u) + d_B(v)$.

The rest of the paper is organized as follows. In Section~\ref{s:hcg} we prove Theorem~\ref{t:hcg}, in Section~\ref{s:hpg} we prove Theorem~\ref{t:hpg} and in Section~\ref{s:fhpg} we prove Theorem~\ref{t:fhpg}. Section~\ref{s:aa} contains the description of the algorithms and optimization methods used in our computer programs, as well as the results we obtain with their help. In Section \ref{s:app} we show how our results can be applied in solving other problems in theory of positional games, by proving Theorem~\ref{t:app}. Finally, in Section~\ref{s:fw} we give some open problems and ideas for future work.

\section{Hamiltonicity game}
\label{s:hcg}

The following lemma lies in the foundation of our proof of Theorem~\ref{t:hcg}.

\begin{lemma}	\label{l:hcg_bc}
Independent of who starts the game $\ham_n$, Breaker wins for $n \in \{ 4, 5, 6, 7 \}$, and Maker wins for $n = 8$ and $n = 9$.
\end{lemma}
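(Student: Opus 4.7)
The plan is to split the lemma into the Breaker-wins range $n\in\{4,5,6,7\}$ and the Maker-wins values $n\in\{8,9\}$. In both halves, Theorem~\ref{t:aux} lets us restrict attention to the ``hard'' variant in which the purported loser is forced to move first: a second-player winning strategy for either player transfers into a first-player winning strategy, so it suffices to exhibit Breaker's win when Maker opens (for $n\le 7$) and Maker's win when Breaker opens (for $n=8,9$).

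For $n=4$ the game is settled by a pure edge count: $K_4$ has $6$ edges, so Maker claims only $3$, whereas every Hamiltonian cycle uses $4$; hence Maker cannot even occupy a full cycle. For $n\in\{5,6,7\}$ I would try to give Breaker a vertex-isolation strategy: a Hamiltonian cycle requires $d_M(v)\ge 2$ at every vertex, so Breaker wins if he can force some $v$ to end with $d_M(v)\le 1$. Concretely, fix a target vertex $v$ of degree $n-1\le 6$ and let Breaker respond at $v$ whenever Maker plays incident to $v$, with a short ad hoc tie-breaking rule for the moves in which Maker plays elsewhere. The delicate issue is that this pure isolation scheme can break down when Maker concentrates his moves at $v$; for those branches one has to split the case analysis further, which for $n=7$ already becomes tedious and is most reliably finished by the computer search described in Section~\ref{s:aa}.

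For $n=8$ and $n=9$ the vertex-isolation defence is no longer available to Breaker, and I see no comparably short Maker strategy that one could write out by hand. Here the natural route is fully algorithmic: starting from the empty position on $E(K_n)$ with Breaker to move first, run the pruned game-tree search developed in Section~\ref{s:aa}, which relies on quotienting positions by their automorphism group, pruning dominated candidate moves, and caching already-evaluated positions in a transposition table. The output in both cases is that Maker wins, and Theorem~\ref{t:aux} then extends this to Maker winning irrespective of who starts.

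The main obstacle is exactly the Maker-wins half: $E(K_8)$ and $E(K_9)$ have $28$ and $36$ edges respectively, so the raw game tree is hopelessly large and the correctness of the lemma at these values depends entirely on the pruning techniques of Section~\ref{s:aa} being strong enough to make the searches terminate in practice. The Breaker cases for $n=5,6,7$, by contrast, are conceptually routine and, beyond the isolation sketch above, can be re-verified uniformly by the same framework as a consistency check.
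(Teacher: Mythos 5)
Your proposal is essentially the same as the paper's: Lemma~\ref{l:hcg_bc} is established by the computer search of Section~\ref{s:aa}, and the paper in fact simply reports the tables of running times and winners for both the Maker-first and Maker-second variants of $\ham_4,\dots,\ham_9$ with no accompanying hand analysis. You add two small embellishments the paper does not use here. First, you invoke Theorem~\ref{t:aux} to halve the work (only the ``hard'' side --- Breaker winning with Maker to move for $n\le 7$, Maker winning with Breaker to move for $n=8,9$ --- needs to be checked); the paper instead just runs both variants. Second, your counting argument for $n=4$ is a nice, genuinely proof-free shortcut: $e(K_4)=6$, so Maker claims exactly $3$ edges in either variant, while any Hamiltonian cycle on $4$ vertices has $4$ edges. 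Your vertex-isolation sketch for $n\in\{5,6,7\}$ is plausible as a guiding idea but, as you yourself concede, it is not carried to completion and the proposal ultimately falls back on the computer there too, just as the paper does; so this does not constitute a materially different route.
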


We prove this result with the help of a computer. Description of the approach used is postponed to Section~\ref{s:aa}.

The previous result gives us a firm induction base for proving that Maker can win $\ham_n$ for all $n \geq 8$. We note that Papaioannou already proved the induction step of size $2$ for Maker as first player, which can be paired up with our Lemma~\ref{l:hcg_bc} to resolve the case when Maker is first. Inspired by his work, in the following lemma we prove the same induction step for Maker playing as second player, which eventually completes the proof of Theorem~\ref{t:hcg}.
	
\begin{lemma} \label{l:hcg_is}
If Maker, as second player, wins $\ham_n$, he can also win $\ham_{n+2}$ as second player.
\end{lemma}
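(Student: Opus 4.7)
The plan is to extend Maker's second-player winning strategy $S$ for $\ham_n$ on $V(K_n)$ to a second-player winning strategy for $\ham_{n+2}$. Writing $V(K_{n+2}) = V(K_n) \cup \{x,y\}$, I will call the edges of $K_{n+2}$ incident to $x$ or $y$ \emph{auxiliary} and the rest \emph{interior}. Maker's target Hamilton cycle of $K_{n+2}$ will be produced by taking a Hamilton cycle $C$ on $V(K_n)$ supplied by $S$ on the interior board, and replacing some edge $uv \in C$ by the detour $u - x - y - v$; for this Maker needs to own three auxiliary edges $xy$, $xu$, $yv$, where $uv$ is an edge of $C$.

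Accordingly, Maker interleaves two sub-strategies. On the interior board he maintains a ``shadow'' $\ham_n$ game in which he replies to each of Breaker's interior moves by $S$'s response, so that $S$ eventually delivers the cycle $C$. This is robust to Maker stealing a bounded number of turns, by the usual observation that granting Breaker a constant number of free moves in $\ham_n$ cannot convert a Maker win into a loss: $S$ has substantial slack, since only $n$ of Maker's $\approx \binom{n}{2}/2$ moves in $\ham_n$ are actually used for the cycle. On the auxiliary board Maker combines an \emph{initiative} phase with a \emph{pairing} scheme. In the initiative phase he adaptively claims $xy$ together with one $x$-edge $xu_0$ and one $y$-edge $yv_0$, with $u_0 \ne v_0$ in $V(K_n)$ chosen so as to avoid whatever Breaker has already grabbed. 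Afterwards, he pairs the remaining $2n-2$ auxiliary edges so as to secure at least one edge of each pair: pair $(xv, yv)$ for each $v \in V(K_n) \setminus \{u_0, v_0\}$, and cross-pair the two leftover edges $(yu_0, xv_0)$; whenever Breaker plays one edge of a pair, Maker replies immediately with its partner.

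Writing $M$ for Maker's final edge set and $X = \{v \in V(K_n) : xv \in M\}$, $Y = \{v \in V(K_n) : yv \in M\}$, the strategy guarantees $u_0 \in X$, $v_0 \in Y$ and $X \cup Y = V(K_n)$. A short combinatorial step on $C$ then produces the required edge $uv$: if $X \cap Y \ne \emptyset$, at any $w \in X \cap Y$ either cycle edge incident to $w$ has its other endpoint in $X \cup Y$ and hence lies across the bipartition $X$--$Y$; and if $X \cap Y = \emptyset$, then $X$ and $Y$ nontrivially partition $V(K_n)$ (both being nonempty, as $u_0 \in X$ and $v_0 \in Y$), so the cycle $C$ must cross this partition at some edge. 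Either way, Maker replaces $uv$ by $u - x - y - v$ and completes a Hamilton cycle of $K_{n+2}$ made entirely of his edges.

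The main obstacle is the case in which Breaker opens the game by claiming $xy$ himself, denying Maker the single-detour plan outright. In this branch I would replace the insertion by a \emph{split} insertion in which $x$ and $y$ are placed at two different edges of $C$: the necessary auxiliary Maker-edges then become $xu_1, xv_1, yu_2, yv_2$ for distinct $u_1 v_1, u_2 v_2 \in C$, and the initiative and pairing strategies must be redesigned so that Maker secures at least two $x$-edges and at least two $y$-edges that happen to fall on two different cycle edges, even if Breaker concentrates his auxiliary moves on one side of the $x$--$y$ bipartition; this is the technically most delicate part of the proof. A subtler but more routine concern, already within the main case, is that Maker's few initiative moves temporarily suspend the pair responses and can allow Breaker to create a bounded number of ``gaps'' $v$ with $\{xv, yv\} \cap M = \emptyset$; verifying that one can still steer $u_0, v_0$ so that the resulting $X$ and $Y$ cross $C$ is the remaining piece of detailed bookkeeping needed to finish the argument.
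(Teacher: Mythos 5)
Your decomposition into interior and auxiliary edges, and the idea of running a shadow $\ham_n$ game on the interior board while Maker grabs a detour $u\!-\!x\!-\!y\!-\!v$ through the auxiliary board, is the same skeleton as the paper's proof. But there is a genuine gap, and it is not a cosmetic one. You fix the two new vertices $x,y$ \emph{before} play starts and then need Maker, playing second, to claim the bridge edge $xy$. Breaker can simply open with $xy$, which you correctly flag as ``the main obstacle,'' but you leave the resulting ``split insertion'' branch unproved. That branch is not routine: you would need Maker to secure a Hamiltonian path through $x$ and $y$ spliced into $C$ at two different places, with a substantially more delicate pairing argument, and you give no indication of how to do it. Likewise, you acknowledge but do not resolve the ``gaps'' caused by Maker's initiative moves temporarily suspending the pairing replies (Breaker could use those tempos to claim both $xv$ and $yv$ for some $v$, breaking $X\cup Y=V(K_n)$). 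Since both of the pieces you defer are exactly where the difficulty lives, the proposal is incomplete rather than merely underspecified.

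The paper sidesteps the $xy$-denied case entirely by choosing the two extra vertices \emph{adaptively}: after Breaker's opening move $(u,v)$, Maker plays $m=(v,w)$ and declares $v,w$ to be the pair to insert. Thus Maker owns the bridge edge from move one, by construction, and there is no split-insertion branch to handle. The remaining bookkeeping (your ``gaps'' concern) is then handled explicitly via a priority list of rules together with two terminal ``Situations,'' allowing Breaker to isolate at most one vertex in the generic case and exactly two in a well-controlled exceptional case, each of which still leaves an insertable cycle edge. If you want to salvage your write-up, the fix is to make $x,y$ a function of Breaker's first move and to have Maker claim the connecting edge immediately; after that your bipartition-crossing argument on $C$ is a nice clean way to finish, but you would still need to replace the hand-waved ``bounded number of gaps'' step with a concrete rule set and a case analysis showing that Breaker can never fully isolate two vertices without Maker obtaining a compensating structure.
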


\begin{proof}
Suppose that Maker, as second player, has a strategy to win $\ham_n$. We will present Maker's winning strategy for $\ham_{n+2}$.
		
At the beginning of the game, Breaker will claim some edge $(u,v)$ and Maker's response will be to claim an edge $m=(v,w)$ for some $w$. Let $V_1:= V(K_{n+2}) \setminus \{v,w\}$. We can now partition the set of all unclaimed edges into two sets, the first set being $E_1 = E(V_1)$ and the second one $E_2 = E(K_{n+2}) \setminus
		(E_1 \cup \{ m, (u,v) \})$. These edge sets are shown in the Figure \ref{f:board_with_two_partitions}.
		
		\begin{figure}[h]
			\begin{center}
				\begin{tikzpicture}
				[scale=0.85, every node/.style={circle, fill=black!80, minimum size=6pt, inner sep=0pt}]
				
				\node (n3) at (1.5,1) [label=right:$w$] {};
				\node (n4) at (1.5,3) [label=right:$v$] {};
				
				\node (n3s) at (1.05,1.4) [fill=none] {};
				\node (n4s) at (1.05,2.6) [fill=none] {};
				
				\node (n0) at (-2,0) [fill=none] {};
				\node (n1) at (-2,0.5) [fill=none] {};
				\node (n2) at (-2,1) [fill=none] {};
				
				\node (d1) at (-1.9,1.55) [minimum size=2pt] {};
				\node (d2) at (-1.9,1.75) [minimum size=2pt] {};
				\node (d3) at (-1.9,1.95) [minimum size=2pt] {};
				
				\node (n7) at (-2,2.5) [fill=none] {};
				\node (n8) at (-2,3) [fill=none] {};
				\node (n9) at (-2,3.5) [label=left:$u$] {};
				
				\node (E1) at (-2,1.75) [fill=none,label=left:$E_1$] {};
				\node (E2) at (3,2) [fill=none] {$E_2$};
				
				\foreach \from/\to in {n3/n0,n3/n1,n3/n2,n3/n7,n3/n8,n3/n9}
					\draw (\from) -- (\to);
				\foreach \from/\to in {n4/n7,n4/n8,n4/n0,n4/n1,n4/n2}
					\draw (\from) -- (\to);
				
				\draw [dashed] (n9) -- (n4) node {};
				\draw [very thick] (n3) -- (n4) node [midway,fill=none,label=left:$m$] {};
				\draw [->,>=stealth',thick] (E2) -- (n3s);
				\draw [->,>=stealth',thick] (E2) -- (n4s);
				\draw (-2,1.75) ellipse (1.5cm and 2.5cm);
				
				\end{tikzpicture}			
			\end{center}
			\caption{Game board, with the partition into two sets.}
			\label{f:board_with_two_partitions}
		\end{figure}

		From this point on, Maker will try to respond to every move of Breaker by claiming the edge in the same edge set as Breaker e.g.~whenever Breaker claims an edge in $E_i$ Maker will also try to claim an edge from $E_i$, $i= 1, 2$. If this is not possible, he will claim an arbitrary edge in the other set. It is generally true that such ``extra'' edges can be ``forgotten'', i.e.~if Maker can accomplish something in the other set without the extra edge, he will still manage to do it with the extra edge, see e.g.~\cite{PositionalGamesBook}.
		
		Maker will play in $E_1$ using his winning strategy for $\ham_n$, meaning that by the time all edges in $E_1$ are claimed he will fully occupy some $n$-cycle $H_n$ in $E_1$.
		
		When playing in $E_2$ Maker's goal is to make sure that by the time all the edges are claimed (in both $E_1$ and $E_2$) he claimed a Hamiltonian cycle. This Hamiltonian cycle will be an extension of $H_n$ to a cycle on two more vertices, now also containing the edge $m$. More precisely, in order to extend the cycle $H_n=(x_1, x_2, \ldots, x_n, x_1)$, Maker wants to claim edges $(x_i, y_1), (x_{i+1}, y_2) \in E_2$, such that the edge $(x_i,x_{i+1}) \in H_n$
		and  $(y_1,y_2) = m$. That way, Maker would claim a Hamiltonian cycle $H_{n+2}=(x_1, \ldots, x_i, y_1, y_2, x_{i+1}, \ldots, x_n, x_1)$, as shown in Figure~\ref{f:extension_of_h_n}.

 		\begin{figure}[h]
			\begin{center}
				\begin{tikzpicture}
				[scale=1.0,every node/.style={circle, fill=black!80, minimum size=6pt, inner sep=0pt}]
				
				\node (n-1) at (-2.5,2) [label=left:$x_n$] {};
				\node (n0) at (-2,1) [label=below:$x_{n-1}$] {};
				\node (n1) at (0,0.5) [label=below:$x_{i+3}$] {};
				\node (n2) at (1,1) [label=right:$x_{i+2}$] {};
				\node (n3) at (1.5,2) [label=left:$x_{i+1}$] {};
				\node (n4) at (1.5,3) [label=left:$x_i$] {};
				\node (n5) at (1,4) [label=right:$x_{i-1}$] {};
				\node (n6) at (0,4.5) [label=above:$x_{i-2}$] {};
				\node (n7) at (-2,4) [label=above:$x_2$] {};
				\node (n8) at (-2.5,3) [label=left:$x_1$] {};
				
				\node (hn) at (0,2.5) [fill=none,label=left:{$H_n$}] {};
				\node (hn2) at (2.5,4) [fill=none,label=right:{$H_{n+2}$}] {};
				
				\node (y1) at (2.5,3.25) [label=right:$y_1$] {};
				\node (y2) at (2.5,1.75) [label=right:$y_2$] {};
				
				\foreach \from/\to in {n1/n2,n2/n3,n4/n5,n5/n6,n4/y1,n3/y2,n0/n-1,n-1/n8,n8/n7}
					\draw (\from) -- (\to);
					
				\draw (y1) -- (y2) node [midway,fill=none,label=right:$m$] {};
				\draw[snake=snake,segment amplitude=.4mm] (n3) -- (n4);
				\draw[dashed] (n0) -- (n1);
				\draw[dashed] (n6) -- (n7);
				
				\end{tikzpicture}			
			\end{center}
			\caption{Extension of $H_n$ to $H_{n+2}$.}
			\label{f:extension_of_h_n}
		\end{figure}	
		
During the game, if for a vertex $x \in V_1$ both edges $(x,v)$ and $(x,w)$ are claimed by Maker (Breaker), we say the vertex $x$ is \emph{isolated by Maker (Breaker)}. If one of the edges $(x,v)$ and $(x,w)$ is claimed by Maker (Breaker) and the other one is unclaimed, we say the the vertex $x$ is \emph{half isolated by Maker (Breaker)}. Finally, if both edges $(x,v)$ and $(x,w)$ are unclaimed, we say that the vertex $x$ is \emph{free}.
When all the edges are claimed, we observe the following two situations.
		
\textbf{Situation 1:} Breaker isolated at most one vertex, and Maker claimed an edge in $E_2$ incident to each of vertices $v$ and $w$.

\textbf{Situation 2:} Breaker isolated exactly two vertices, Maker has one isolated vertex $a$, and Maker claimed an edge from $E_2 \setminus \{(a,v), (a,w)\}$ incident to each of vertices $v$ and $w$.

First, let us prove that in each of the two above mentioned situations Maker created a Hamiltonian cycle and thus won the game. We can apply simple labeling procedure, assigning a label set to each vertex $x \in H_n$. This set contains $0$ if and only if edge $(x,v)$ is claimed by Maker and $1$ if and only if edge $(x,w)$ is claimed by Maker.

In Situation~1, since Maker claimed an edge in $E_2$ incident to each of vertices $v$ and $w$, at least one vertex is labeled with $0$, and at least one vertex is labeled with $1$. Also, there is at most one vertex with empty label set, so in every Hamiltonian cycle $H_n$ there must exist two adjacent vertices $x_i$ and $x_{i+1}$ containing labels 0 and 1, respectively. Maker will then use these vertices $x_i$ and $x_{i+1}$ to extend the cycle $H_n$.

In Situation~2 there is a vertex $a$ with label set $\{0,1\}$. If this vertex is adjacent to some vertex $x \in H_n$ whose label set is not empty, Maker can extend the cycle $H_n$ using $a$ and $x$. Otherwise, if vertex $a$ is adjacent to vertices whose label set is empty (two vertices isolated by Breaker), then Maker will be able to extend $H_n$ for the same reasons as in Situation~1, since he claimed an edge from $E_2 \setminus \{(a,v), (a,w)\}$ incident to each of vertices $v$ and $w$.
		
Next, we are going to give a strategy for Maker that will enable him to always end up in one of these two situations.

\paragraph{Maker's strategy on $E_2$.} In each move, Maker will apply the first applicable rule in the following list:

\begin{enumerate}

\item If there is only one free vertex $x \in V_1$ left, and Maker has not claimed any edges incident to a vertex  $y \in \{v, w\}$, he claims the edge $(x,y)$.

\item If there exists a vertex half isolated by Breaker, Maker picks an arbitrarily vertex $x$ half isolated by Breaker and claims the other edge incident to $x$.

\item If there exists a free vertex, Maker picks an arbitrarily free vertex and claims one of its free edges. Of the two edges, he prefers the edge incident with the vertex in $\{v, w\}$ incident with less edges claimed by Maker (if such vertex exists).

\item Maker claims a free edge incident to a vertex half isolated by Maker. Of all such edges, he prefers edges incident with the vertex in $\{v, w\}$ incident with less edges claimed by Maker (if such vertex exists), breaking ties arbitrarily.
\end{enumerate}

As any free edge is incident either with a vertex half isolated by Breaker, or a free vertex, or a vertex half isolated by Maker, this is a valid strategy.

Initially, Breaker has claimed one edge, the edge $(u,v)$. For his first move in $E_2$ he essentially has only three different choices, shown in Figure~\ref{f:options_for_breakers_second_move} (along with the move of Maker that will follow in each of the cases).
		
		\begin{figure}[h]
			\begin{center}
				\begin{subfigure}[b]{0.32\textwidth}
					\captionsetup{margin=0.34cm}				
					\begin{center}
					\begin{tabular}{|c|c|c|c|c|c|}
						\multicolumn{1}{c}{} & \multicolumn{1}{c}{$u$} & \multicolumn{1}{c}{$x_{i_1}$} &
						\multicolumn{2}{c}{} & \multicolumn{1}{c}{$x_{i_{n-1}}$} \\
						\hhline{|~|-|-|-|-|-|}
						\multicolumn{1}{c|}{$v$} & \cellcolor[gray]{0.8}$B$ &  &
						\multicolumn{2}{c|}{\multirow{2}*{\ldots}} &  \\
						\cline{2-3}\cline{6-6}
						\multicolumn{1}{c|}{$w$} & $M$ & $B$ & \multicolumn{2}{c|}{} &  \\
						\cline{2-6}
					\end{tabular}
					\end{center}	
				\caption{Breaker claims $(w,x_{i_1})$, \\ Maker claims $(w,u)$.}
				\label{f:breakers_second_move_1}
		        \end{subfigure}
				\begin{subfigure}[b]{0.32\textwidth}
					\captionsetup{margin=0.34cm}				
					\begin{center}
					\begin{tabular}{|c|c|c|c|c|c|}
						\multicolumn{1}{c}{} & \multicolumn{1}{c}{$u$} & \multicolumn{1}{c}{$x_{i_1}$} &
						\multicolumn{2}{c}{} & \multicolumn{1}{c}{$x_{i_{n-1}}$} \\
						\hhline{|~|-|-|-|-|-|}
						\multicolumn{1}{c|}{$v$} & \cellcolor[gray]{0.8}$B$ & $B$ &
						\multicolumn{2}{c|}{\multirow{2}*{\ldots}} &  \\
						\cline{2-3}\cline{6-6}
						\multicolumn{1}{c|}{$w$} & $M$ &  & \multicolumn{2}{c|}{} &  \\
						\cline{2-6}
					\end{tabular}
					\end{center}	
				\caption{Breaker claims $(v,x_{i_1})$, \\ Maker claims $(w,u)$.}
				\label{f:breakers_second_move_2}
		        \end{subfigure}
				\begin{subfigure}[b]{0.32\textwidth}
					\captionsetup{margin=0.34cm}				
					\begin{center}
					\begin{tabular}{|c|c|c|c|c|c|}
						\multicolumn{1}{c}{} & \multicolumn{1}{c}{$u$} & \multicolumn{1}{c}{$x_{i_1}$} &
						\multicolumn{2}{c}{} & \multicolumn{1}{c}{$x_{i_{n-1}}$} \\
						\hhline{|~|-|-|-|-|-|}
						\multicolumn{1}{c|}{$v$} & \cellcolor[gray]{0.8}$B$ &  &
						\multicolumn{2}{c|}{\multirow{2}*{\ldots}} &  \\
						\cline{2-3}\cline{6-6}
						\multicolumn{1}{c|}{$w$} & $B$ & $M$ & \multicolumn{2}{c|}{} &  \\
						\cline{2-6}
					\end{tabular}
					\end{center}	
				\caption{Breaker claims $(u,w)$, \\ Maker claims $(w,x_{i_1})$.}
				\label{f:breakers_second_move_3}
		        \end{subfigure}
			\end{center}
			\caption{Three options for Breaker's first move in $E_2$ followed by Maker's response, shown
			in the form of incidence matrix.}
			\label{f:options_for_breakers_second_move}
		\end{figure}
		
	Now, let us analyze each of the three cases from Figure \ref{f:options_for_breakers_second_move} individually.

		\textbf{Case (a):} In this case, Breaker first has an option to isolate a vertex $x_{i_1}$. If he indeed does so, Maker will apply Rule 3 and claim an edge incident to a vertex $v$, otherwise Maker will apply Rule 2 and claim an edge incident to $v$. When Breaker isolates a vertex, Maker will apply Rule 2 in the remainder of the game to prevent Breaker from isolating more vertices. The proposed strategy thus inevitably puts Maker in Situation 1.
		
		\textbf{Case (b):} Again, Breaker can start by isolating a vertex $x_{i_1}$ which inevitably puts Maker in Situation 1 as this is analogous to Case (a). Otherwise, Maker will wait (by applying Rule 2) for Breaker to isolate some vertex, until there is only one free vertex left. If Breaker isolates a vertex during this period of the game Maker will apply Rule 3 claiming an edge incident to $v$ and he will spend the remainder of the game applying Rule 2 in order to prevent Breaker from isolating more vertices. It can happen that Breaker does not isolate any vertex by the time there is only one free vertex left, in which case Maker will end the game by applying the Rule 1 as shown in Figure~\ref{f:breaker_abusing_maker_responds}. Therefore, Maker will inevitably find himself in Situation 1.
		
		\begin{figure}[h]
			\begin{center}
				\begin{tabular}{|c|c|c|c|c|c|c|c|c|c|}
					\multicolumn{1}{c}{} & \multicolumn{1}{c}{$u$} & \multicolumn{1}{c}{$x_{i_1}$} &
					\multicolumn{1}{c}{$x_{i_2}$} & \multicolumn{2}{c}{} & \multicolumn{1}{c}{$x_{i_{n-4}}$} &
					\multicolumn{1}{c}{$x_{i_{n-3}}$} & \multicolumn{1}{c}{$x_{i_{n-2}}$} &
					\multicolumn{1}{c}{$x_{i_{n-1}}$} \\
					\hhline{|~|-|-|-|-|-|-|-|-|-|}
					\multicolumn{1}{c|}{$v$} & \cellcolor[gray]{0.8}$B$ & \cellcolor[gray]{0.8}$B$ &
					\cellcolor[gray]{0.8}$B$ & \multicolumn{2}{c|}{\multirow{2}*{\ldots}} &
					\cellcolor[gray]{0.8}$B$ & \cellcolor[gray]{0.8}$B$ & $B$ & $M$ \\
					\hhline{|~|-|-|-|~|~|-|-|-|-|}
					\multicolumn{1}{c|}{$w$} & \cellcolor[gray]{0.8}$M$ & \cellcolor[gray]{0.8}$M$ &
					\cellcolor[gray]{0.8}$M$ & \multicolumn{2}{c|}{} & \cellcolor[gray]{0.8}$M$ &  &  & \\
					\cline{2-10}
				\end{tabular}
			\end{center}
			\caption{Breaker can play so that Rule~1 is applied}
			\label{f:breaker_abusing_maker_responds}
		\end{figure}
		
		\textbf{Case (c):} In this case we will need to further analyze each of the three possible Breaker's moves. Breaker can either claim an edge incident to the only vertex half isolated by Maker, or for some free vertex
        $x \in V_1$ he can claim $(x,w)$ or $(x,v)$. These three possibilities are shown in Figure~\ref{f:third_case_analysis} (along with the move of Maker that will follow in each of the cases).
		
		\begin{figure}[h]
			\renewcommand\thesubfigure{(\roman{subfigure})}
			\begin{center}
				\begin{subfigure}[b]{0.32\textwidth}
					\captionsetup{margin=0.33cm}
					\begin{center}
					\begin{tabular}{|c|c|c|c|c|c|}
						\multicolumn{1}{c}{} & \multicolumn{1}{c}{$u$} & \multicolumn{1}{c}{$x_{i_1}$} &
						\multicolumn{1}{c}{$x_{i_2}$} & \multicolumn{2}{c}{} \\
						\hhline{|~|-|-|-|-|-|}
						\multicolumn{1}{c|}{$v$} & \cellcolor[gray]{0.8}$B$ & $B$ & $M$ &
						\multicolumn{2}{c|}{\multirow{2}*{\ldots}} \\
						\hhline{|~|-|-|-|~|~|}
						\multicolumn{1}{c|}{$w$} & \cellcolor[gray]{0.8}$B$ & \cellcolor[gray]{0.8}$M$ &  &
						\multicolumn{2}{c|}{} \\
						\cline{2-6}
					\end{tabular}
					\end{center}
				\caption{Breaker claims $(v,x_{i_1})$, \\ Maker claims $(v,x_{i_2})$.}
				\label{f:breakers_third_move_1}
		        \end{subfigure}
				\begin{subfigure}[b]{0.32\textwidth}
					\captionsetup{margin=0.33cm}				
					\begin{center}
					\begin{tabular}{|c|c|c|c|c|c|}
						\multicolumn{1}{c}{} & \multicolumn{1}{c}{$u$} & \multicolumn{1}{c}{$x_{i_1}$} &
						\multicolumn{1}{c}{$x_{i_2}$} & \multicolumn{2}{c}{} \\
						\hhline{|~|-|-|-|-|-|}
						\multicolumn{1}{c|}{$v$} & \cellcolor[gray]{0.8}$B$ &  & $M$ &
						\multicolumn{2}{c|}{\multirow{2}*{\ldots}} \\
						\hhline{|~|-|-|-|~|~|}
						\multicolumn{1}{c|}{$w$} & \cellcolor[gray]{0.8}$B$ & \cellcolor[gray]{0.8}$M$ & $B$ &
						\multicolumn{2}{c|}{} \\
						\cline{2-6}
					\end{tabular}
					\end{center}	
				\caption{Breaker claims $(w,x_{i_2})$, \\ Maker claims $(v,x_{i_2})$.}
				\label{f:breakers_third_move_2}
		        \end{subfigure}
				\begin{subfigure}[b]{0.33\textwidth}
					\captionsetup{margin=0.33cm}			
					\begin{center}
					\begin{tabular}{|c|c|c|c|c|c|}
						\multicolumn{1}{c}{} & \multicolumn{1}{c}{$u$} & \multicolumn{1}{c}{$x_{i_1}$} &
						\multicolumn{1}{c}{$x_{i_2}$} & \multicolumn{2}{c}{} \\
						\hhline{|~|-|-|-|-|-|}
						\multicolumn{1}{c|}{$v$} & \cellcolor[gray]{0.8}$B$ &  & $B$ &
						\multicolumn{2}{c|}{\multirow{2}*{\ldots}} \\
						\hhline{|~|-|-|-|~|~|}
						\multicolumn{1}{c|}{$w$} & \cellcolor[gray]{0.8}$B$ & \cellcolor[gray]{0.8}$M$ & $M$ &
						\multicolumn{2}{c|}{} \\
						\cline{2-6}
					\end{tabular}
					\end{center}	
				\caption{Breaker claims $(v,x_{i_2})$, \\ Maker claims $(w,x_{i_2})$.}
				\label{f:breakers_third_move_3}
		        \end{subfigure}
			\end{center}
			\caption{Further analysis of Case (c) from Figure \ref{f:options_for_breakers_second_move}.}
			\label{f:third_case_analysis}
		\end{figure}

		In cases~(i) and~(ii), which are essentially the same, Maker claimed an edge incident to each of vertices $v$ and $w$ while Breaker already isolated a vertex $u$. Maker will continue the game with only goal to prevent Breaker from isolating more vertices, by applying Rule~2 whenever it is possible, and hence the game will eventually end up in Situation~1.

		In the third case we will first analyse the game until there is only one free vertex left. Maker is again focused on preventing the Breaker from isolating a vertex by applying Rule~2 whenever possible. If during this period of the game Maker claims some edge incident to $v$, Rule~1 will not be applied, and the game will finish in Situation~1. If it happens that Rule~1 needs to be applied, Maker will claim an edge $(v,x_{i_{n-1}})$, where $x_{i_{n-1}}$ is the last free vertex. Depending if Breaker now isolates a vertex $x_{i_{n-2}}$ or not, the game can end up in different situations. If Breaker does not isolate $x_{i_{n-2}}$ the game ends in Situation~1. Otherwise, after vertex $x_{i_{n-2}}$ is isolated by Breaker, Maker isolates $x_{i_1}$ and the game ends in Situation~2.
		
		Hence, Maker can always create one of the two winning situations.
	\end{proof}
	
	\newpage
	
	\begin{proof}[Proof of Theorem \ref{t:hcg}]
		Applying mathematical induction, with Lemma \ref{l:hcg_bc} as the base case and Lemma \ref{l:hcg_is}
		as the induction step we get that Maker, as second player, wins $\ham_n$ for all $n \geq 8$. Combining this 
		result with the result from Theorem~\ref{t:aux} we complete the proof of Theorem~\ref{t:hcg}.
	\end{proof}

\section{Hamiltonian path game}
\label{s:hpg}

In order to complete the proof of Theorem~\ref{t:hpg} we used our computer program to obtain following result.
	
	\begin{lemma}
	\label{l:hpg_bc}
Independent of who starts the game $\mathcal{HP}_n$, Breaker wins for $n = 4$, and Maker wins for $n \in \{5,6,7\}$.
	\end{lemma}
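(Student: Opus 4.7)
The plan is to invoke Theorem~\ref{t:aux} so that it suffices to handle only the second-player case for each value of $n$: it is enough to show that Breaker wins $\mathcal{HP}_4$ as second player, and that Maker wins $\mathcal{HP}_n$ as second player for $n\in\{5,6,7\}$.

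For $n=4$ the result admits a short pairing argument that avoids any computer search. The edge set of $K_4$ is partitioned into its three perfect matchings $M_1=\{12,34\}$, $M_2=\{13,24\}$, $M_3=\{14,23\}$. Any Hamiltonian path $v_1v_2v_3v_4$ uses edges $v_1v_2,v_2v_3,v_3v_4$, and the two end edges $v_1v_2$ and $v_3v_4$ are vertex-disjoint, so together they form one of the matchings $M_i$. Breaker, playing second, applies the pairing strategy induced by $M_1,M_2,M_3$: whenever Maker claims an edge of $M_i$, Breaker claims the other edge of $M_i$. This ensures Maker ends up with at most one edge of each $M_i$, so Maker cannot claim any $M_i$ in full, and hence cannot claim any Hamiltonian path.

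For $n\in\{5,6,7\}$ I would follow the same route as in Lemma~\ref{l:hcg_bc}: run the game-tree search engine of Section~\ref{s:aa} on the empty board of $K_n$ with Breaker to move first, and verify that Maker has a winning response strategy. The engine evaluates positions recursively, memoizes outcomes in a transposition table, and canonicalizes each position under the action of the vertex automorphism group so that isomorphic positions are not revisited. Additional pruning comes from early acceptance (Maker already owns a Hamiltonian path) and early rejection (Breaker has claimed enough edges around some vertex to force Maker's edge degree at that vertex below what a Hamiltonian endpoint requires).

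The main obstacle is computational rather than structural: for $n=7$ the board has $\binom{7}{2}=21$ edges, so a naive traversal would touch on the order of $21!$ play sequences, and only the combination of canonical-form reduction, transposition-table reuse, and the game-specific pruning heuristics of Section~\ref{s:aa} makes the search tractable. The $n=4$ case then serves as a convenient sanity check that the engine agrees with the hand proof above.
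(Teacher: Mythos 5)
Your proposal is correct, and for $n\in\{5,6,7\}$ it takes essentially the same route as the paper: exhaustive game-tree search with canonical-labeling and transposition-table pruning. The reduction via Theorem~\ref{t:aux} to the second-player case is sound in both directions used (Breaker second $\Rightarrow$ Breaker first, and Maker second $\Rightarrow$ Maker first), and it lets you run only one search per $n$, whereas the paper ran both.

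Where you genuinely diverge from the paper is the $n=4$ case, and this is a real improvement. The paper's ``proof'' of Lemma~\ref{l:hpg_bc} is nothing but a table of runtimes; it offers no human-verifiable argument for any value of $n$. Your pairing proof for $n=4$ is correct and self-contained: the two end edges $v_1v_2$ and $v_3v_4$ of any Hamiltonian path in $K_4$ are vertex-disjoint, hence constitute one of the three perfect matchings $M_1,M_2,M_3$ of $K_4$; the pairing strategy on $\{M_1,M_2,M_3\}$ guarantees Breaker an edge in each $M_i$, so Maker never owns both end edges of any Hamiltonian path, and therefore never owns a Hamiltonian path. This is exactly the sort of short structural argument the paper forgoes. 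The only small quibble is your description of the ``early rejection'' pruning: the paper's engine declares Breaker the winner when the maintained family $\mathcal{F}$ of Breaker-free winning sets becomes empty, not via a degree-based endpoint count, but this affects only exposition, not correctness.
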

Description of the proof and the approach used is postponed to Section~\ref{s:aa}.

With the help of the previous lemma and Theorem~\ref{t:hcg}, the proof of Theorem~\ref{t:hpg} is straightforward.
	
	\begin{proof}[Proof of Theorem~\ref{t:hpg}]
		It is easy to see that Maker's win in Hamiltonicity game implies Maker's win in Hamiltonian
		path game since each Hamiltonian cycle contains a Hamiltonian path.
		This means that Theorem~\ref{t:hcg} also tells us that Maker can win $\mathcal{HP}_n$ for all $n
		\geq 8$ as second player. When we combine this with the result from Lemma~\ref{l:hpg_bc} proof of
		Theorem~\ref{t:hpg} is complete.
	\end{proof}

\section{Fixed Hamiltonian Path game}
\label{s:fhpg}

The following lemma will cover several finite cases in the proof of Theorem~\ref{t:fhpg}.
	\begin{lemma}
	\label{l:fhcg_ib}
When Maker starts the game $\mathcal{FHP}_n$, Breaker wins for $n\in\{4, 5, 6\}$, and Maker wins for $n \in \{7,8,9\}$.

When Breaker is the one who starts, he wins for $n\in \{4, 5, 6, 7\}$, while Maker wins for $n \in \{8,9\}$.
	\end{lemma}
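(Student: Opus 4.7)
The plan is to settle all the finite cases listed in the lemma by running the computer-aided search procedure developed in Section~\ref{s:aa}, the same framework already used to prove Lemmas~\ref{l:hcg_bc} and~\ref{l:hpg_bc}. Since $\mathcal{FHP}_n$ is a finite Maker-Breaker game with no draws, a full game-tree traversal determines the outcome in principle; the real task is doing this traversal efficiently enough to terminate for $n$ up to $9$.

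Before launching the search I would use Theorem~\ref{t:aux} to roughly halve the work. Since a winning strategy as second player automatically yields one as first player, it suffices to verify Breaker's win as second player (equivalently, when Maker starts) for $n\in\{4,5,6\}$, from which Breaker's win as first player follows, and Maker's win as second player (when Breaker starts) for $n\in\{8,9\}$, from which Maker's win as first player follows. The only asymmetric case is $n=7$: Maker wins when he starts, but Breaker wins when he starts, so neither starting order is implied by the other and both must be checked independently.

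For each remaining case I would invoke the generic framework of Section~\ref{s:aa}, an alpha-beta style traversal equipped with several layers of pruning: exploiting board automorphisms so that edges in the same orbit are expanded only once, a transposition table to avoid re-solving positions reached by different move orders, and domination arguments tailored to edge games on graphs. In $\mathcal{FHP}_n$ the two distinguished vertices $u$ and $v$ already break most of the symmetry group of $K_n$ down to the permutations acting on $V(K_n)\setminus\{u,v\}$, and the observation made right after Theorem~\ref{t:fhpg}, that the edge $(u,v)$ can be discarded from the board, trims one more edge before the search even begins. The principal obstacle is raw computational cost: for $n=9$ the reduced board already has $\binom{9}{2}-1=35$ edges, so the game tree has on the order of $35!$ leaves, and only aggressive pruning will make the traversal tractable. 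The hard part of the proof is therefore algorithmic rather than mathematical, and it is absorbed into the correctness and efficiency of the procedures of Section~\ref{s:aa}; once those are in place, obtaining the specific outcomes claimed in the lemma reduces to running the program on each of the four boards at each of the two starting orders and reading off the returned win/loss values.
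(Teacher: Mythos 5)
Your proposal is essentially the same as the paper's: Lemma~\ref{l:fhcg_ib} is proved by running the Section~\ref{s:aa} computer program for each $n \in \{4,\dots,9\}$ and each starting player, and the paper simply tabulates the resulting winners and running times. The one small variation in your plan --- invoking Theorem~\ref{t:aux} to skip the first-player runs whenever the corresponding second-player run already settles the case, with $n=7$ correctly identified as the one asymmetric case requiring both runs --- is a valid shortcut, though the paper does not bother with it and reports all twelve runs.
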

We prove this result with the help of a computer. Description of the approach used is postponed to the following section.
	
It remains to determine the outcome of the game $\mathcal{FHP}_n$ for larger values of $n$. We will not perform an induction step, but rather rely on our knowledge of the outcome of $\ham_n$.
Combining the following result with Theorem~\ref{t:hcg}, along with Lemma~\ref{l:fhcg_ib} that covers the few initial cases, we readily prove Theorem~\ref{t:fhpg}.
	
	\begin{lemma}
	\label{l:fxcg_is}
		If Maker, as second player, wins $\ham_n$, he can also win $\mathcal{FHP}_{n+2}$ as second player.
	\end{lemma}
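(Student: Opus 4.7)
The plan is to follow the blueprint of Lemma~\ref{l:hcg_is} almost verbatim, exploiting the fact that completing a Hamiltonian $u$-$v$ path in $K_{n+2}$ from a Hamiltonian cycle on $V_1 := V(K_{n+2}) \setminus \{u,v\}$ is structurally the same operation as extending a Hamiltonian $n$-cycle to a Hamiltonian $(n+2)$-cycle in that lemma: in both cases one deletes an edge $(x_i,x_{i+1})$ of the inner cycle and reroutes through the two designated outer vertices. So, assuming Maker has a winning strategy $\sigma$ for $\ham_n$ as second player, I would partition $E(K_{n+2}) = E_1 \cup E_2 \cup \{(u,v)\}$ with $E_1 := E(V_1)$ and $E_2 := \{(u,x),(v,x) : x \in V_1\}$, noting that $(u,v)$ appears in no winning set, since any Hamiltonian $u$-$v$ path has $u$ and $v$ as its distinct endpoints and so never traverses the edge between them. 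Maker plays the standard partitioning strategy: reply to each Breaker move in the same part, and treat a Breaker move on $(u,v)$ (or on a saturated part) as a pass, replying by a throwaway move elsewhere -- extra Maker edges only help, cf.~\cite{PositionalGamesBook}. Running $\sigma$ on $E_1$ then produces a Hamiltonian cycle $H_n=(x_1,\ldots,x_n,x_1)$ on $V_1$ by the end of the game.

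On $E_2$ I would use the four-rule strategy of Lemma~\ref{l:hcg_is} verbatim, with $\{u,v\}$ playing the role of $\{v,w\}$; the notions of isolated, half-isolated, and free vertex carry over unchanged. The extraction argument is the same labelling argument: assign to $x \in V_1$ the label $0$ when Maker owns $(x,u)$ and the label $1$ when Maker owns $(x,v)$. Given any adjacent pair $(x_i,x_{i+1})$ of $H_n$ with $0$ in $x_i$'s label and $1$ in $x_{i+1}$'s (or vice versa), Maker forms a Hamiltonian $u$-$v$ path by deleting $(x_i,x_{i+1})$ from $H_n$ and appending $(u,x_i)$ and $(v,x_{i+1})$; the cycle ``unfolds'' into the desired path the long way round. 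The two winning situations of Lemma~\ref{l:hcg_is}, restated with $u,v$ in place of $v,w$, then guarantee by the same counting argument (at most one empty label, at least one label containing $0$, at least one containing $1$; in Situation~2 one uses the doubly-labelled vertex $a$ to transition, or, if both neighbours of $a$ are Breaker-isolated, one finds a complementary adjacent pair among the remaining $n-3$ good vertices, using $n \ge 8$) that such an adjacent pair always exists.

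The opening case analysis is where the differences appear, and this is where I expect the main technical obstacle. Unlike Lemma~\ref{l:hcg_is}, Maker does not begin holding a pre-claimed bridge edge $m$, since the target path does not pass directly between $u$ and $v$; on the other hand, Breaker's opening move is no longer forced to be $(u,v)$. If Breaker opens by claiming $(u,v)$, Maker treats it as a pass and makes a free opening move in $E_2$, recovering exactly the setup of Lemma~\ref{l:hcg_is}. If Breaker opens in $E_1$, Maker follows $\sigma$ there until Breaker first touches $E_2$, at which point the $E_2$ subgame begins with Breaker to move. If Breaker opens directly in $E_2$ by attacking some vertex $x_{i_1}$ via one of its star edges, Maker responds by the same case split as Cases~(a)--(c) of Lemma~\ref{l:hcg_is}, breaking Rule~3 and Rule~4 ties toward the currently less-used of $\{u,v\}$. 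The delicate point will be verifying that the absent bridge edge $m$ does not disturb the invariants kept by Rules~2--4 -- concretely, that Maker still ends the game owning at least one edge incident to each of $u$ and $v$ (weakening to Situation~2 only in the extreme Case~(c)(iii) branch), so that the labelling argument can always locate a complementary adjacent pair in $H_n$. The theorem then follows by combining this induction-style step with Lemma~\ref{l:fhcg_ib} as base cases and with Theorem~\ref{t:aux} and Theorem~\ref{t:hcg}.
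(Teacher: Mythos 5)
Your proof is correct and follows essentially the same route as the paper: the same $E_1/E_2$ decomposition, the $\ham_n$ second-player strategy on $E_1$, and a verbatim reuse of the four-rule $E_2$ strategy and labelling argument from Lemma~\ref{l:hcg_is}. The paper is even terser, dismissing the opening-case concerns you flag by simply observing that the $\mathcal{FHP}_{n+2}$ position is only more favourable to Maker (one extra unclaimed edge in $E_2$, no pre-claimed Breaker edge half-isolating a vertex of $V_1$, and the bridge $m$ no longer needed since the target is a $u$--$v$ path rather than a cycle), so the identical strategy carries over without further case analysis.
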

	\begin{proof}
Suppose that Maker, as second player, has a strategy to win $\ham_n$. We will present Maker's winning strategy for $\mathcal{FHP}_{n+2}$, with two fixed vertices $u$ and $v$.
	
As we already noted, the edge $e = (u,v)$ is not interesting to either of the players since it is not contained in any of the winning sets. Let $V_1:= V(K_{n+2}) \setminus \{u,v\}$. We can partition the set of all unclaimed edges into two sets, the first set being $E_1 = E(V_1)$ and the second one $E_2 = E(K_{n+2}) \setminus (E_1 \cup \{ e \})$.

Now the goal of Maker is exactly the same as in the proof of Lemma~\ref{l:hcg_is} -- he wants to build an $n$-cycle on $E_1$, and he wants to play on $E_2$ to connect vertices $u$ and $v$ to two neighboring vertices of that $n$-cycle. As the structure of $E_1$ and $E_2$ here is exactly the same, with exception of $E_2$ having one more unclaimed edge (which is only beneficial for Maker), he can follow the same strategy as in the proof of Lemma~\ref{l:hcg_is} and win the game.
	\end{proof}

\section{Algorithmic analysis}
\label{s:aa}
	
	In this section we present the techniques and approaches used in our computer program to solve games
	$\ham_n$, $\mathcal{HP}_n$ and $\mathcal{FHP}_n$ when game boards are relatively small, as
	well as some of the major challenges we encountered during the process.
	
	Our program is available at \url{http://people.dmi.uns.ac.rs/~nikola.trkulja/ham.html}, where
	one can find all the information on how to run and use it, along with the source code.

\subsection{Traversing the game tree}	
	
	As we already noted, in order to determine the winner of a particular game, it is enough to completely
	traverse the game tree, which essentially comes down to simulation of all possible plays. This means that in each round we need to explore all valid moves
	a player can play. To achieve this we used the following algorithm which represents an adaptation of the standard and well-known game theoretic algorithm -- Minimax \cite{AIBook}.
	
	\begin{algorithm}
		\begin{codebox}
		
		\Procname{$\proc{Play}(G,P)$}
		\li $winner \gets \const{nil}$
		\li \For $i \gets 1$ \To $\attrib{G}{freeEdges.size}$ \label{al:move_ordering}
		\li		\Do $\proc{DoMove}(\attrib{G}{freeEdges[i]},G,P)$
		\li			\If $\proc{PlayerWins}(G,P)$
		\li 			\Then
							$winner \gets P$
		\li				\Else
		\li					$winner \gets \proc{Play}(G,\attrib{P}{otherPlayer})$
						\End
		\li			$\proc{RevertMove}(\attrib{G}{freeEdges[i]},G,P)$
		\li			\If $winner \isequal P$
		\li				\Then \Return $P$
						\End
				\End				
		\li \Return $\attrib{P}{otherPlayer}$
		
		\end{codebox}
		\caption{}
		\label{a:basic}
	\end{algorithm}
	
	Main procedure of our algorithm is procedure $\proc{Play}$, in charge of
	recursively exploring the whole game subtree starting from the given tree node (containing the current game board $G$,
	and the identity of the player $P$ whose turn it is). The value returned by procedure $\proc{Play}$
    corresponds to the player winning the game on board $G$, when player $P$ moves first.
    As the name suggests, procedures
	$\proc{DoMove}(e,G,P)$ and $\proc{RevertMove}(e,G,P)$ are used to mark and unmark, respectively, the edge $e$ as
	claimed by player $P$ on board $G$.

    The winner detection is an essential part of the algorithm, done by procedure $\proc{PlayerWins}$. Every time the winner is not
	immediately detected by procedure $\proc{PlayerWins}$, we continue recursively exploring the
	game tree. The algorithm stops processing the remaining
	possible moves for current player $P$ as soon there is a move which guaranties the win. On the other hand, if it turns out that there is no such move, we know that the
    other player wins the game.

\subsection{Isomorphism pruning}
	
	Basic version of the algorithm we just described can be improved so that the same game
	subtrees are not explored multiple times. This is where \emph{transposition table} \cite{AIGames}
	jumps into picture. Straightforward application of transposition table would be to store the winner for
	each game tree node, and then use these values to skip processing identical nodes. But the exponential
	size of the game tree renders such solution impossible because it would require huge amounts of
	computer memory. Much more efficient way of handling this situation is to recognize the isomorphic nodes
    of the game tree. For this task we implemented Brendan McKay's \emph{canonical labeling} algorithm
	\cite{McKayPGI}. Canonical labeling of a graph is defined in the following way.
	
	\begin{definition}
	Canonical labeling is a function $C$ mapping graphs to graphs, such that every graph $G_1$ is isomorphic to $C(G_1)$, and for every graph $G_2$ isomorphic with $G_1$ we have $C(G_2)=C(G_1)$.
	\end{definition}
	
	In other words, canonical labeling gives a unique representative for each class of isomorphic
	graphs, and with McKay's algorithm we can define and compute this graph in an efficient manner. That enables us to prune the game tree significantly, as we can store the winner only for canonical labelings.

\subsubsection{Edge colored graphs}

	When a Maker-Breaker game is played on the edge set of a graph, the situation on the game board can be seen as a 3-coloring of the
    edges, as each edge is either unclaimed (black), claimed by Maker (red), or claimed by Breaker (blue).
	
	One of the issues in application of McKay's canonical labeling algorithm emerges
	right here because the original version only handles non-colored and vertex colored graphs
	out of the box. In~\cite{McKayNauty} the author of the algorithm described how edge colored graph can
	be transformed into equivalent vertex colored graph which can then be fed as an input to canonical
	labeling algorithm (an interested reader can find more details in~\cite{Spermann08}). Unfortunately
	this transformation produces a graph with more vertices than the original graph (even when we have just two
	colors, the number of vertices doubles). Because of this, we did the implementation from scratch, 
	having~\cite{Turner07} in mind, where authors adapted the canonical labelling algorithm to work with edge
	colored graphs by distinguishing the vertices according to the colors of incident edges.

\subsubsection{Non-transitive winning sets}
	
	Another issue with detection of isomorphic game tree nodes emerges when dealing with Fixed Hamiltonian
	path game since this game has an additional property compared to the other two games we worked on -- there are two
    predefined vertices that require special attention. Therefore, we have to incorporate an additional condition,
    only taking into account the isomorphisms which map those two vertices to themselves.

    We further slice down each isomorphism class into subclasses defined in the following way.

	\begin{definition}
	We say that graph $G_1$ with predefined vertices $u_1, v_1 \in V(G_1)$ is in the same isomorphism
	subclass as graph $G_2$ with predefined vertices $u_2, v_2 \in V(G_2)$ if and only if $C(G_1) =
	C(G_2)$, and isomorphism functions $f_1$ and $f_2$ transforming $G_1$ and $G_2$	into $C(G_1)$, respectively, are such that $\{ f_1(u_1), f_1(v_1) \} = \{ f_2(u_2), f_2(v_2) \}$.
	\end{definition}
	
	Applying this definition to our problem, we chose sets $\{ u_1, v_1 \}$ and $\{ u_2, v_2 \}$ to be
	sets of endpoints of winning sets in $\mathcal{FHP}_n$, meaning that Maker wins the game on	$G_1$ if and only if he wins on $G_2$.

\subsection{Move ordering}

	Our algorithm is not processing the whole game tree but rather skipping some of the subtrees
	whenever it is sure about the outcome of that particular subtree. For example, when Maker is the one
	to claim an edge, we will test all the possible moves that he can play, one after another, until we find the one that guarantees his win (provided that such a move exists).
    Of course, it would be better that we probe a winning move as early as possible. Having this in mind we tested a number of heuristics for defining
	the order of the iteration over free edges in line~\ref{al:move_ordering} of Algorithm~\ref{a:basic}.
	
	We relied on sorting the free edges according to a predefined criterion. Particularly, free edges are sorted in non-decreasing order according to Maker's edge degree $ed_M$ for Maker's moves, and in non-increasing order according to Breaker's edge degree $ed_B$ for Breaker's moves.
    The idea behind this is that, informally speaking, Maker may want to prefer playing where his ``weakest link'' is, while Breaker may prefer choosing to enforce his ``strongest disconnecting point'' first.
	
	It turned out this optimization technique, implemented as described, is one of the most
	important ones, and it had a tremendous impact on the performance of our algorithm making the number of
	traversed game tree nodes significantly smaller.

\subsection{Winner detection}

	Procedure $\proc{PlayerWins}(G,P)$ is essential for checking if	the board of the game $G$ is such that the player $P$ won the game\footnote{If $P$ is Maker, he won if he claimed a whole winning set, and if $P$ is Breaker, he won if he claimed an edge in each of the winning sets.}. In order to efficiently
	implement this task we always did pre-computation of the set $\mathcal{F}$ of winning sets that are free of Breaker's edges. E.g.~in
	game $\ham_n$ we pre-generate the set $\mathcal{F}$ of all Hamiltonian cycles in $K_n$, and then we constantly
	update $\mathcal{F}$ so that in each moment it contains only the winning sets without any edges claimed by Breaker.

\subsection{Optimization of memory usage}	
	
	Several optimization techniques applied in our software are on the technical i.e.~implementation
    level, and majority of them are computer memory related. This is natural since we
	rely on transposition table in order to keep track of calculated values for visited game
	tree nodes, or more precisely their isomorphism classes, and even for relatively small graphs the number of
	these classes is extremely large.
	
	Utilizing sets in many components of our program we are able to achieve optimal ratio of time
	and memory efficiency. Sets are used to implement graphs, canonical labelings, winning
	sets and most importantly transposition tables. Of course, we needed efficient implementations of
	sets and for that job we used bitsets and high-performance 3rd party set implementations (in the form
	of Trove library~\cite{Trove}).
	
	After all this effort, our transposition tables were still not able to cope with the required amount
	of entries because of the limited computer memory. To solve this problem, we applied various heuristics
	to clean the tables i.e.~to remove the ``less important'' entries and free up the memory for the ``more
	important'' ones. One efficient technique to do this was to focus mainly on entries coming from lower
	depths of the game tree, say depths less than or equal to a fixed integer $d$, so whenever we detect that the 
	table is full all entries coming from depths larger than $d$ are removed from the table.

\subsection{Implementation, testing and results}
	
	Implementation was done using Java programming language, relying only on standard Java libraries
	with just one additional component, the already mentioned 3rd party high performance collections library -- Trove~\cite{Trove}.
    There are two main parts of our computer program, Minimax algorithm and Brendan McKay's canonical labeling
    algorithm. We chose to implement canonical labeling algorithm
	on our own in order to be more flexible and tailor the algorithm fully to our needs. We note that there are implementations of related algorithms available online (e.g.~\cite{Nauty}).
	
	While implementing the algorithm we followed the idea of doing it in a generic way
	to make our code easily adaptable for solving other positional games on graphs as well. Examples of this can be seen when browsing through
	our code since for all three games we exploited this approach, using the same code base with just
	minor modifications for each particular game.
	
	For testing purposes we used various revisions of Java version 6 and 7, and on top of that we ran the
	program on a number of different versions of three different operating systems,
	Windows, Mac OSX and Linux, all in the effort to minimize the possibility of error coming from any
	of these layers. We further conducted thorough testing of both of the main parts of our
	implementation using unit tests, in order to eliminate the possibility of errors coming from smaller
	components of our program. In the end, we put a lot of additional effort to verify that
	our implementation of Brendan McKay's algorithm is correct by comparing output of our
	implementation with the one provided by the author~\cite{Nauty} for many different types of graphs.
	
	The machine used for testing was Fujitsu Celsius M730 with Intel Xeon E5-1620 processor running at
    3.7 GHz, 32GB of RAM and with Ubuntu operating system. We were able to prove following statements using this machine.
	
	\begin{proof}[Proof of Lemma \ref{l:hcg_bc}]
		$ $
		\begin{center}
			\renewcommand{\arraystretch}{1.15}
			\begin{tabular}{|c|c|c|c|c|c|c|}
			\cline{2-7}
 			\multicolumn{1}{c|}{} & $\ham_4$ & $\ham_5$
 								  & $\ham_6$ & $\ham_7$
 								  & $\ham_8$ & $\ham_9$ \\
			\hline
			\doublecell{Running time}{Maker as first player} & 0s & 0s & 0.1s & 1s & 2s & 30s \\
			\hline
			Winner & Breaker & Breaker & Breaker & Breaker & Maker & Maker \\
			\hline
			\end{tabular}
			\renewcommand{\arraystretch}{1.0}
		\end{center}
		\begin{center}
			\renewcommand{\arraystretch}{1.15}
			\begin{tabular}{|c|c|c|c|c|c|c|}
			\cline{2-7}
 			\multicolumn{1}{c|}{} & $\ham_4$ & $\ham_5$
 								  & $\ham_6$ & $\ham_7$
 								  & $\ham_8$ & $\ham_9$ \\
			\hline
			\doublecell{Running time}{Maker as second player} & 0s & 0s & 0.1s & 1s & 3s & 68s \\
			\hline
			Winner & Breaker & Breaker & Breaker & Breaker & Maker & Maker \\
			\hline
			\end{tabular}
			\renewcommand{\arraystretch}{1.0}
		\end{center}		
	\end{proof}

	\begin{proof}[Proof of Lemma \ref{l:hpg_bc}]
		$ $
		\begin{center}
			\renewcommand{\arraystretch}{1.15}
			\begin{tabular}{|c|c|c|c|c|}
			\cline{2-5}
 			\multicolumn{1}{c|}{} & $\mathcal{HP}_4$ & $\mathcal{HP}_5$
 								  & $\mathcal{HP}_6$ & $\mathcal{HP}_7$ \\
			\hline
			\doublecell{Running time}{Maker as first player} & 0s & 0s & 0s & 0.3s \\
			\hline
			Winner & Breaker & Maker & Maker & Maker \\
			\hline
			\end{tabular}
			\renewcommand{\arraystretch}{1.0}
		\end{center}
		\begin{center}
			\renewcommand{\arraystretch}{1.15}
			\begin{tabular}{|c|c|c|c|c|}
			\cline{2-5}
 			\multicolumn{1}{c|}{} & $\mathcal{HP}_4$ & $\mathcal{HP}_5$
 								  & $\mathcal{HP}_6$ & $\mathcal{HP}_7$ \\
			\hline
			\doublecell{Running time}{Maker as second player} & 0s & 0s & 0.1s & 0.3s \\
			\hline
			Winner & Breaker & Maker & Maker & Maker \\
			\hline
			\end{tabular}
			\renewcommand{\arraystretch}{1.0}
		\end{center}
	\end{proof}

	\begin{proof}[Proof of Lemma \ref{l:fhcg_ib}]
		$ $
		\begin{center}
			\renewcommand{\arraystretch}{1.15}
			\begin{tabular}{|c|c|c|c|c|c|c|}
			\cline{2-7}
 			\multicolumn{1}{c|}{} & $\mathcal{FHP}_4$ & $\mathcal{FHP}_5$
 								  & $\mathcal{FHP}_6$ & $\mathcal{FHP}_7$
 								  & $\mathcal{FHP}_8$ & $\mathcal{FHP}_9$ \\
			\hline
			\doublecell{Running time}{Maker as first player} & 0s & 0s & 0.5s & 3s & 110s & 3990s \\
			\hline
			Winner & Breaker & Breaker & Breaker & Maker & Maker & Maker \\
			\hline
			\end{tabular}
			\renewcommand{\arraystretch}{1.0}
		\end{center}
		\begin{center}
			\renewcommand{\arraystretch}{1.15}
			\begin{tabular}{|c|c|c|c|c|c|c|}
			\cline{2-7}
 			\multicolumn{1}{c|}{} & $\mathcal{FHP}_4$ & $\mathcal{FHP}_5$
 								  & $\mathcal{FHP}_6$ & $\mathcal{FHP}_7$
 								  & $\mathcal{FHP}_8$ & $\mathcal{FHP}_9$ \\
			\hline
			\doublecell{Running time}{Maker as second player} & 0s & 0s & 0.1s & 2s & 230s & 16286s \\
			\hline
			Winner & Breaker & Breaker & Breaker & Breaker & Maker & Maker \\
			\hline
			\end{tabular}
			\renewcommand{\arraystretch}{1.0}
		\end{center}
	\end{proof}

\section{An application}
\label{s:app}

Generally speaking, the most direct way to show that a player wins a positional game is to exhibit an explicit strategy for the player, proving that it is a winning one. There are numerous cases (see~\cite{PositionalGamesBook} for examples) that such a strategy relies on strategies for one or more auxiliary positional games, where the auxiliary games are often confined to some part of the board and/or some stage of the game play.

Our results are applicable when such auxiliary games have Hamiltonian cycles or Hamiltonian paths as winning sets, and they are played on boards of fixed size. An example of that can be found in~\cite[Theorem 1.4]{Sparse}, when estimating the smallest number of edges $\hat{m}(n)$ a graph on $n$ vertices can have, knowing that Maker as the first player can win the Maker-Breaker Hamiltonicity game played on its edges. Using our results from Theorem~\ref{t:hcg} and Theorem~\ref{t:fhpg}, we are able to improve the upper bound on $\hat{m}(n)$.

In~\cite{Sparse} it was showed that $\hat{m}(n) \leq 21n$ for $n \geq 1600$, by explicit construction of a graph $G'_n$ with $n$ vertices and $21n$ edges, along with a proof that Maker can win the game playing on $E(G'_n)$. The graph $G'_n$ was constructed by dividing $n$ vertices into sets $V_0, V_1, \ldots , V_{m-1}, V'$, with $m \geq 40$, such that $d \leq |V_i| \leq d+1$, where $d = 38$, and $V' = \{ u_0, \ldots, u_{m-1} \}$ contains  exactly $m$ vertices. As for the edges of $G_n$, for every $0 \leq i <m$, every two vertices in $V_i$ are connected with an edge (i.e.~the graph induced on $V_i$ is a clique), and on top of that, for every $0 \leq i < m$, there is an edge between $u_i$ and every vertex of both $V_i$ and $V_{i+1}$ (indices are observed modulo $m$). All of this totals to no more than $(\frac{d}{2} + 2)n = 21n$ edges.

To prove that Maker can win Hamiltonicity game on $E(G'_n)$, the authors of~\cite{Sparse} relied on the fact that Maker wins $\ham_n$ on $E(K_n)$ for all $n \geq 38$, as $n=38$ was the smallest known such $n$ at the time. This was later improved by Hefetz and Stich~\cite{HefetzStich} to $n=29$, and now our Theorem~\ref{t:hcg} gives the optimal $n=8$. Having this in mind we can repeat the exact same proof with an adjusted construction of $G'_n$, reducing the size of all $V_i$ by setting $d=8$, and getting an immediate improvement of the upper bound to $\hat{m}(n) \leq 6n$.
	
But, it turns out that with the help of Fixed Hamiltonian Path game we are able to alter the construction and get an even better upper bound.

	\begin{proof}[Proof of Theorem \ref{t:app}]
		We will first describe our construction of the graph $G_n$ whose edge set will be the board of the game,
        and then we will prove that on this board Maker indeed can win Hamiltonicity game.

        Let $d = 7$, and let $m$ and $0 \leq r < d$ be integers such that $n = md + r$. The vertices of $G_n$ are partitioned into $m$ sets
        $V_0, V_1,\ldots , V_{m-1}$, such that $|V_i|=d+1$, $0 \leq i < r$, and $|V_i|=d$, $r \leq i < m$. Next,
		we arbitrarily pick vertices $a_i \in V_i$, $0 \leq i < m$, and define $W_i:= \{ a_{i-1} \} \cup V_i$, where indices are observed modulo $m$.
        The set of edges of $G_n$ is defined with
        $$ E(G_n) = \bigcup_{i=0}^{m-1} \Big\{ (v_1, v_2) \, \Big| \,\,\, v_1, v_2\in W_i, \,\, v_1 \not= v_2, \,\, \{ v_1, v_2\} \not= \{ a_{i-1}, a_i\} \Big\}. $$
        In other words, the graph $G_n$ consists of a ``cycle'' of cliques, where each two consecutive cliques overlap on a vertex, and each clique is missing
        exactly one edge (between the two vertices it shares with the neighboring cliques).
        The number of edges in $G_n$ is
        \begin{align*}
        E(G_n) &= r\binom{9}{2} + (m-r)\binom{8}{2} - m  \\
        & =  m\left(\binom{8}{2} -1 \right) + 8r \\
        & \leq \frac{n}{7} \left(4\cdot 7 - 1 \right) + 48.
        \end{align*}
        If $n \geq 336$, this is upper bounded by $4n$.

        To win Hamiltonicity game on $E(G_n)$, Maker simply plays $m$ Fixed Hamiltonian Path games in parallel, one on each of graphs $G_n[W_i]$, $0 \leq i < m$.
        More precisely, in each of those games Maker, as the second player, plays the game $\mathcal{FHP}_{|W_i|}$, with fixed vertices $a_{i-1}$ and $a_i$. As we previously observed, the edge $(a_{i-1}, a_i)$ does not participate in any winning sets, so Theorem~\ref{t:fhpg} guarantees Maker's win in all those games. Hence, for every $0 \leq i < m$, Maker can claim a Hamiltonian path on $G_n[W_i]$ with endpoints $a_{i-1}$ and $a_i$. These paths together form a cycle that covers all vertices of $G_n$, so Maker wins the Hamiltonicity game on $E(G_n)$.
	\end{proof}

Even though our proof works for $n \geq 336$, we can similarly obtain a non-trivial upper bound for any $n \geq 14$. Indeed, we can construct a graph consisting of a ``cycle'' of (two or more) cliques in the exact same way as in the previous proof, just making sure that each clique has at least $8$ vertices. Then we can borrow Maker's strategy from the previous proof, as well as the argument that he can claim a Hamiltonian cycle playing on that graph.

\section{Future work}
\label{s:fw}

	We have resolved three unbiased games played on the edges of a complete graph, Hamiltonicity game, Hamiltonian Path game and Fixed Hamiltonian Path game, for every $n$ and for each
    of the players starting the game. One related game that remains out of reach is the so-called \emph{Hamiltonian-Connectivity game}, where Maker's goal is to claim all edges of
    some spanning Hamiltonian-connected graph (a graph in which every two vertices are connected by a Hamiltonian path). Our approach was not efficent enough to handle Hamiltonian-Connectivity game since the nature of the winning sets is too complex making our pruning and winner detection procedures much less effective. This game may be utilized in a similar way as Fixed Hamiltonian Path game, but with
    more freedom -- what were \emph{two fixed (predetermined) vertices} in Fixed Hamiltonian Path game, become \emph{any two vertices} in Hamiltonian-Connectivity game. For that reason, we think that knowing the outcome of the game for every $n$ would be valuable.

    One consequence of finding the winner of a game using a computer is that typically we do not have a compact graph-theoretic description of a winning strategy for the winner. This is true for all games on small boards that we looked at -- we know who wins, but this knowledge is not accompanied by a coherent strategy (other than a huge list of winning moves, one for each game position).
	
	Because our computer program is implemented in a generic way it could be fairly easily adapted for
	attacking other positional games when played on small (enough) boards. We are hoping that other positional games can be solved using the tools developed in this paper.

\bibliographystyle{abbrv-pages}
\bibliography{references}

\end{document}